\newtheorem{theorem}{Theorem}
\newtheorem{remark}{Remark}
\theoremstyle{definition}
\newtheorem{definition}{Definition}
\newtheorem{problem}{Problem}
\title{\LARGE \bf
    Risk-Aware Adaptive Control Barrier Functions for Safe Control of Nonlinear Systems under Stochastic Uncertainty}
\author{Shuo Liu$^{1}$ and Calin A. Belta$^{2}$
\thanks{This work was supported in part by the NSF under grant IIS-2024606 at Boston University.}
\thanks{$^{1}$S. Liu is with the department of Mechanical Engineering, Boston
University, Brookline, MA, USA. 
        {\tt\small liushuo@bu.edu}}%
\thanks{$^{2}$C. Belta is with the Department of Electrical and Computer Engineering and the Department of Computer Science, University of Maryland, College Park, MD, USA 
        {\tt\small cbelta@umd.edu}}%
}
\begin{document} 
\maketitle

\begin{abstract}
This paper addresses the challenge of ensuring safety in stochastic control systems with high-relative-degree constraints, while maintaining feasibility and mitigating conservatism in risk evaluation. Control Barrier Functions (CBFs) provide an effective framework for enforcing safety constraints in nonlinear systems. However, existing methods struggle with feasibility issues and multi-step uncertainties. To address these challenges, we introduce Risk-aware Adaptive CBFs (RACBFs), which integrate Discrete-time Auxiliary-Variable adaptive CBFs (DAVCBFs) with coherent risk measures. DAVCBFs introduce auxiliary variables to improve the feasibility of the optimal control problem, while RACBFs incorporate risk-aware formulations to balance safety and risk evaluation performance. By extending discrete-time high-order CBF constraints over multiple steps, RACBFs effectively handle multi-step uncertainties that propagate through the system dynamics. We demonstrate the effectiveness of our approach on a stochastic unicycle system, showing that RACBFs maintain safety and feasibility while reducing unnecessary conservatism compared to standard robust formulations of discrete-time CBF methods.
\end{abstract}

\section{Introduction}
\label{sec:Introduction}

Constrained optimal control problems with safety requirements are fundamental in emerging safety-critical autonomous and cyber-physical systems, which are often modeled using stochastic systems. These systems must operate safely in unpredictable environments while interacting with humans, whose behavior is difficult to predict. To address risk, systems science and control engineering offer a rich history of methods for handling uncertainty, broadly categorized into worst-case, risk-neutral, and risk-aware approaches \cite{wang2022risk}. The risk-aware paradigm offers a balanced approach between worst-case and risk-neutral methods. While the worst-case paradigm ensures safety by considering the most severe violations \cite{zhou1998essentials}, it often leads to overly conservative and impractical solutions. On the other hand, the risk-neutral paradigm \cite{bertsekas1996stochastic} optimizes average performance but fails to account for rare but critical failures. Risk-aware methods address these limitations by considering outcomes beyond the average while avoiding excessive conservatism \cite{howard1972risk}, leading to more practical and adaptive decision-making in safety-critical systems.

Recent studies have demonstrated that stabilizing an affine control system while ensuring safety constraints and control limitations can be achieved by unifying Control Barrier Functions (CBFs) with Control Lyapunov Functions (CLFs), forming a sequence of single-step optimization programs \cite{ames2014control, ames2016control}. When using a quadratic cost, these optimizations become quadratic programs (QP), enabling real-time implementation \cite{ames2014control}. Extensions of safety-critical control with CBFs have introduced formulations for high relative degree constraints and adaptive control \cite{nguyen2016exponential, xiao2021high, liu2023auxiliary, liu2024auxiliary, liu2025auxiliary}. To address safety in discrete-time systems, discrete-time CBFs (DCBFs) were introduced in \cite{agrawal2017discrete}, while discrete-time high-order CBFs (DHOCBFs) were later proposed in \cite{xiong2022discrete} to handle high relative degree constraints. All the CBFs mentioned above are for
deterministic systems.

Various approaches have been proposed in the literature to enforce safety in stochastic systems using CBFs. One approach to incorporating robust control in CBFs is to enforce safety by considering worst-case bounds on random variables and uncertainties \cite{lindemann2019control, cohen2022high}. However, as previously mentioned, this often results in overly conservative behavior, keeping the system state far from the safe set boundary. A second approach ensures probabilistic satisfaction of safety constraints, meaning the safety condition holds with a probability exceeding a given threshold. This has been incorporated into the CBF framework using stochastic (Itô) differential equations \cite{clark2021control, santoyo2021barrier} or chance constraints \cite{wang2021chance}. Another approach uses extended class $\kappa$ functions instead of class $\kappa$ functions when defining a CBF \cite{jankovic2018robust, tan2021high}, ensuring that the system state is always stabilized to the safe set whenever the safety constraint is violated. However, these methods are unsuitable for scenarios where a single failure could result in catastrophic consequences.
The recently proposed risk-aware DCBF approach \cite{singletary2022safe, akella2024risk} provides a middle ground between worst-case robustness and probabilistic safety constraint satisfaction by explicitly considering both the likelihood and severity of undesirable outcomes. However, this approach is applicable exclusively to safety constraints that possess a relative degree of one. This means that if a safety constraint has a high relative degree, the uncertainties will propagate dynamically 
over multiple steps corresponding to the relative degree, making them difficult to compensate for with low-order constraints and potentially compromising safety. Moreover, the optimization problems in the aforementioned studies often become infeasible due to conflicts between stochastic safety constraints and control bounds. Additionally, these works do not address improving feasibility under such constraints.

In order to address the challenges of conservativeness, feasibility and multi-step uncertainties when enforcing safety using DCBFs for stochastic systems, this paper contributes a general form of risk-aware adaptive DHOCBF. Specifically, we define discrete-time auxiliary-variable adaptive CBFs (DAVCBFs), which introduce auxiliary variables that multiply each DCBF and incorporate adaptive dynamics for these variables to construct the corresponding DCBF constraint. This approach allows for the relaxation of the DAVCBF constraint, improving the feasibility of the DCBF-based optimization problem. Meanwhile, DAVCBFs ensure safety for stochastic constraints with arbitrary relative degree by effectively handling multi-step uncertainties that propagate over multiple stages. To address the conservativeness of risk evaluation, we propose risk-aware adaptive (discrete-time) CBFs (RACBFs), which are compositions of DAVCBFs and coherent risk measures. The existence of RACBFs ensures safety in a risk-sensitive manner, avoiding the assumption of the absolute worst case when it is highly improbable, thereby achieving a better trade-off between safety and risk assessment performance. We demonstrate the effectiveness of the proposed method through its application to a unicycle system in a safety-critical scenario.

\section{Definitions and Preliminaries}
\label{sec:Preliminaries}
\subsection{Discrete-Time High-Order CBFs}
\label{subsec:DHOCBFs}
We consider a discrete-time control system in the form  
\begin{equation}
\label{eq:discrete-dynamics}
\mathbf{x}_{t+1}=f(\mathbf{x}_{t},\mathbf{u}_{t}),
\end{equation}
where $\mathbf{x}_{t} \in \mathcal X \subset \mathbb{R}^{n}$ represents the state of the system at time step $t\in\mathbb{N}, \mathbf{u}_{t}\in \mathcal U \subset \mathbb{R}^{q}$ is the control input, and the function $f: \mathbb{R}^{n}\times \mathbb{R}^{q}\to \mathbb{R}^{n}$ is locally Lipschitz.
\begin{definition}[Relative degree~\cite{sun2003initial}]
\label{def:relative-degree}
Let the output mapping be \( h: \mathbb{R}^n \rightarrow \mathbb{R} \). The output $y_{t}=h(\mathbf{x}_{t})$ of system \eqref{eq:discrete-dynamics} is said to have relative degree $m$ if
\begin{equation}
\begin{split}
&y_{t+i}=h(\bar{f}_{i-1}(f(\mathbf{x}_{t},\mathbf{u}_{t}))), \ i \in \{1,2,\dots,m\},\\
 \text{s.t.} & \ \frac{\partial y_{t+m}}{\partial \mathbf{u}_{t}} \ne \textbf{0}_{q}, \frac{\partial y_{t+i}}{\partial \mathbf{u}_{t}}= \textbf{0}_{q},  \ i \in \{1,2,\dots,m-1\},
\end{split}
\end{equation}
i.e., $m$ is the number of steps (delay) in the output $y_{t}$ in order for any component of the control input $\mathbf{u}_{t}$ to explicitly appear ($\textbf{0}_{q}$ is the zero vector of dimension $q$). 
\end{definition}

In the above definition, we use $\bar{f}(\mathbf{x})$ to denote the uncontrolled state dynamics $f(\mathbf{x}, 0)$. The subscript $i$ of function $\bar{f}(\cdot)$ denotes the $i$-times recursive compositions of $\bar{f}(\cdot)$, i.e.,  $\bar{f}_{i}(\mathbf{x})=\underset{i\text{-times}~~~~~~~~~~~~~}{\underbrace{\bar{f}(\bar{f}(\dots,\bar{f}}(\bar{f}_{0}(\mathbf{x}))))}$ with $\bar{f}_{0}(\mathbf{x})=\mathbf{x}$.

We assume that $h(\mathbf{x})$ has relative degree $m$ with respect to system~(\ref{eq:discrete-dynamics}) based on Def~\ref{def:relative-degree}.  
Starting with $\psi_{0}(\mathbf{x}_{t}) \coloneqq h(\mathbf{x}_{t})$, we define a sequence of discrete-time functions $\psi_{i}: \mathbb{R}^{n} \to \mathbb{R}$ for $i=1,\dots,m$ as:
\begin{equation}
\label{eq:high-order-discrete-CBFs}
\psi_{i}(\mathbf{x}_{t})\coloneqq \bigtriangleup \psi_{i-1}(\mathbf{x}_{t})+\alpha_{i}(\psi_{i-1}(\mathbf{x}_{t})), 
\end{equation}
where $\bigtriangleup \psi_{i-1}(\mathbf{x}_{t})\coloneqq \psi_{i-1}(\mathbf{x}_{t+1})-\psi_{i-1}(\mathbf{x}_{t})$, and $\alpha_{i}(\cdot)$ denotes the $i^{th}$ class $\kappa$ function which satisfies $\alpha_{i}(\psi_{i-1}(\mathbf{x}_{t}))\le \psi_{i-1}(\mathbf{x}_{t})$ for $i=1,\ldots, m$.
A sequence of sets $\mathcal {C}_{i}$ is defined based on \eqref{eq:high-order-discrete-CBFs} as
\begin{equation}
\label{eq:high-order-safety-sets}
\mathcal {C}_{i}\coloneqq \{\mathbf{x}\in \mathbb{R}^{n}:\psi_{i}(\mathbf{x})\ge 0\}, \ i \in\{0,\ldots,m-1\}.
\end{equation}

\begin{definition}[DHOCBF~\cite{xiong2022discrete}]
\label{def:high-order-discrete-CBFs}
Let $\psi_{i}(\mathbf{x}), \ i\in \{1,\dots,m\}$ be defined by \eqref{eq:high-order-discrete-CBFs} and $\mathcal {C}_{i},\ i\in \{0,\dots,m-1\}$ be defined by \eqref{eq:high-order-safety-sets}. A function $h:\mathbb{R}^{n}\to\mathbb{R}$ is a Discrete-Time High-Order Control Barrier Function (DHOCBF) with relative degree $m$ for system \eqref{eq:discrete-dynamics} if there exist $\psi_{m}(\mathbf{x})$ and $\mathcal {C}_{i}$ such that
\begin{equation}
\label{eq:highest-order-CBF}
\psi_{m}(\mathbf{x}_{t})\ge 0, \ \forall x_{t}\in \mathcal{C}_{0}\cap \dots \cap \mathcal {C}_{m-1}, t\in \mathbb{N}_{\geq 0}.
\end{equation}
\end{definition}

\begin{theorem}[Safety Guarantee \cite{xiong2022discrete}]
\label{thm:forward-invariance}
Given a DHOCBF $h(\mathbf{x})$ from Def. \ref{def:high-order-discrete-CBFs} with corresponding sets $\mathcal{C}_{0}, \dots,\mathcal {C}_{m-1}$ defined by \eqref{eq:high-order-safety-sets}, if $\mathbf{x}_{0} \in \mathcal {C}_{0}\cap \dots \cap \mathcal {C}_{m-1},$ then any Lipschitz controller $\mathbf{u}_{t}$ that satisfies the constraint in \eqref{eq:highest-order-CBF}, $\forall t\ge 0$ renders $\mathcal {C}_{0}\cap \dots \cap \mathcal {C}_{m-1}$ forward invariant for system \eqref{eq:discrete-dynamics}, $i.e., \mathbf{x}_{t} \in \mathcal {C}_{0}\cap \dots \cap \mathcal {C}_{m-1}, \forall t\ge 0.$
\end{theorem}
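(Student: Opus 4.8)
The plan is to prove forward invariance by induction on the time step $t$, propagating the claim that $\mathbf{x}_t$ lies in every set $\mathcal{C}_0 \cap \dots \cap \mathcal{C}_{m-1}$. The base case $t=0$ is given by hypothesis. For the inductive step, suppose $\mathbf{x}_t \in \mathcal{C}_0 \cap \dots \cap \mathcal{C}_{m-1}$; I must show $\mathbf{x}_{t+1}$ lies in the same intersection, i.e. $\psi_i(\mathbf{x}_{t+1}) \ge 0$ for all $i \in \{0,\dots,m-1\}$.

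The key engine is the recursive identity \eqref{eq:high-order-discrete-CBFs}, which I would rearrange as
\begin{equation*}
\psi_{i-1}(\mathbf{x}_{t+1}) = \psi_i(\mathbf{x}_t) - \alpha_i(\psi_{i-1}(\mathbf{x}_t)) + \psi_{i-1}(\mathbf{x}_t).
\end{equation*}
I would run a second (downward) induction on the index $i$, starting from the top. By the DHOCBF property \eqref{eq:highest-order-CBF} applied at $\mathbf{x}_t$ (which is in the intersection by the outer inductive hypothesis), we have $\psi_m(\mathbf{x}_t) \ge 0$. Now suppose we have established $\psi_i(\mathbf{x}_t) \ge 0$ for some $i \in \{1,\dots,m\}$; combined with $\mathbf{x}_t \in \mathcal{C}_{i-1}$, i.e. $\psi_{i-1}(\mathbf{x}_t) \ge 0$, and the class-$\kappa$ condition $\alpha_i(\psi_{i-1}(\mathbf{x}_t)) \le \psi_{i-1}(\mathbf{x}_t)$, the displayed identity gives
\begin{equation*}
\psi_{i-1}(\mathbf{x}_{t+1}) \ge \psi_i(\mathbf{x}_t) \ge 0,
\end{equation*}
so $\mathbf{x}_{t+1} \in \mathcal{C}_{i-1}$. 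Iterating this from $i=m$ down to $i=1$ yields $\psi_j(\mathbf{x}_{t+1}) \ge 0$ for all $j \in \{0,\dots,m-1\}$, which is exactly $\mathbf{x}_{t+1} \in \mathcal{C}_0 \cap \dots \cap \mathcal{C}_{m-1}$, closing the outer induction.

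The main subtlety to watch is bookkeeping of the two indices: at each stage of the downward index-induction I need both a fact about $\psi_i$ at time $t$ (which is why $\mathbf{x}_t$ being in the full intersection, not just $\mathcal{C}_0$, matters) and the nonnegativity of $\psi_{i-1}$ at time $t$ from the outer hypothesis. I should also note that the controller $\mathbf{u}_t$ satisfying \eqref{eq:highest-order-CBF} is what makes $\psi_m(\mathbf{x}_t) \ge 0$ legitimate to invoke at every $t$, so the Lipschitz/control-admissibility assumption enters only to guarantee the closed-loop trajectory is well-defined. No genuine obstacle is expected — the result is essentially a telescoping consequence of \eqref{eq:high-order-discrete-CBFs} and the class-$\kappa$ inequality — but care must be taken that $\alpha_i$ applied to a nonnegative argument stays below that argument, which is precisely the stated restriction on the $\alpha_i$'s; without it the chain $\psi_{i-1}(\mathbf{x}_{t+1}) \ge \psi_i(\mathbf{x}_t)$ would fail.
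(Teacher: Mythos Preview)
Your proposal is correct. Note, however, that the paper does not actually supply a proof of this theorem --- it is quoted from \cite{xiong2022discrete} without argument --- so there is no in-paper proof to compare against directly. That said, the paper does carry out essentially the same telescoping argument for closely related statements (Remark~\ref{rem:safety-guarantee-2} and the proof of Theorem~\ref{thm:safety-guarantee-3}): there the organization is to fix an order index and propagate nonnegativity forward over all future times, then step the order index down, whereas you fix the time step and run the order index down, then advance time. The two schemes are equivalent reorderings of the same double induction, driven by the identical identity $\psi_{i-1}(\mathbf{x}_{t+1}) = \psi_i(\mathbf{x}_t) + \bigl(\psi_{i-1}(\mathbf{x}_t) - \alpha_i(\psi_{i-1}(\mathbf{x}_t))\bigr)$ and the class-$\kappa$ bound $\alpha_i(s) \le s$. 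One cosmetic remark: your ``downward induction on $i$'' is not really inductive, since for each $i<m$ the premise $\psi_i(\mathbf{x}_t)\ge 0$ comes straight from the outer hypothesis $\mathbf{x}_t\in\mathcal{C}_i$ rather than from the preceding downward step; only $i=m$ requires the DHOCBF condition. This does not affect correctness.
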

We can simply define an $i^{th}$ order DCBF $\psi_{i}(\mathbf{x})$ in \eqref{eq:high-order-discrete-CBFs} as
\begin{equation}
\label{eq:simple-high-order-discrete-CBFs}
\psi_{i}(\mathbf{x}_{t})\coloneqq \bigtriangleup \psi_{i-1}(\mathbf{x}_{t})+\gamma_{i}\psi_{i-1}(\mathbf{x}_{t}),
\end{equation}
where $\alpha_{i}(\cdot)$ is a linear function and $0<\gamma_{i}\le 1, i\in \{1,\dots,m\}$.
\subsection{Coherent Risk Measure}
\label{subsec: Coherent Risk Measure}
The aim of this section is to introduce conditional risk measures as a foundation for defining risk-based CBFs in Sec. \ref{sec:RACBFs}. Consider a probability space $(\Omega, \mathcal{F}, \mathbb{P})$, a filtration $\mathcal{F}_0 \subseteq \dots \subseteq \mathcal{F}_N \subseteq \mathcal{F}$, and an adapted sequence of random variables $h_t, t = 0, \dots, N$, where $N \in \mathbb{N}_{\geq 0} \cup \{\infty\}$. For $t = 0, \dots, N$, we further define the spaces $\mathcal{H}_t = L_p(\Omega, \mathcal{F}_t, \mathbb{P}), p \in [0, \infty)$, $\mathcal{H}_{t:N} = \mathcal{H}_t \times \dots \times \mathcal{H}_N$, and $\mathcal{H} = \mathcal{H}_0 \times \mathcal{H}_1 \times \dots$. We assume that the sequence $\boldsymbol{h} \in \mathcal{H}$ is almost surely bounded (with probability zero exceptions), i.e., $\operatorname{ess} \sup_t |h_t(\cdot)| < \infty$.
To evaluate the risk of the sub-sequence $h_t, \dots, h_N$ from the perspective of stage $t$, we require the following definitions.
\begin{definition}[Conditional Risk Measure]
A mapping $\rho_{t:N} : \mathcal{H}_{t:N} \to \mathcal{H}_t$, where $0 \leq t \leq N$, is called a conditional risk measure if it satisfies the following monotonicity property:
\begin{equation}
    \rho_{t:N}(h) \leq \rho_{t:N}(h'), \quad \forall h, h' \in \mathcal{H}_{t:N} \text{ such that } h \leq h'.
\end{equation}
A dynamic risk measure is a sequence of conditional risk measures $\rho_{t:N} : \mathcal{H}_{t:N} \to \mathcal{H}_t$, for $t = 0, \dots, N$.
\end{definition}
A fundamental property of dynamic risk measures is their consistency over time \cite{ruszczynski2010risk}. This means that if a risk evaluation at some future time $t_\tau$ considers $h$ to be as good as $h'$, and if $h$ and $h'$ are identical between times $t_k$ and $t_\tau$ ($k<\tau$), then from the perspective of time $t_k$, $h$ should not be considered worse than $h'$. For a time-consistent risk measure, the one-step conditional risk measure $\rho_t$ is defined as a mapping:
\begin{equation}
    \rho_t(h_t) = \rho_{t-1: t}(0, h_t),
\end{equation}
and for all $t = 1, \dots, N$, we obtain:
\begin{equation}
\label{eq:rho-measure}
\begin{split}
    \rho_{t:N} (h_t, \dots, h_N) = \rho_t ( h_t + \rho_{t+1} ( h_{t+1} + \rho_{t+2} ( h_{t+2} + \dots \\
    + \rho_{N-1} ( h_{N-1} + \rho_N (h_N) ) \dots ))).
\end{split}
\end{equation}
Thus, time-consistent risk measures are fully determined by their one-step conditional risk measures $\rho_t$, for $t = 0, \dots, N-1$. In particular, when $t = 0$, this defines a risk measure for the entire sequence $\boldsymbol{h} \in \mathcal{H}_{0:N}$. This leads to the notion of a coherent risk measure.
\begin{definition}[Coherent Risk Measure]
\label{def:Coherent Risk Measure}
The one-step conditional risk measures $\rho_t : \mathcal{H}_{t} \to \mathcal{H}_{t-1}$, $t = 1, \dots, N$, as in \eqref{eq:rho-measure}, are called coherent risk measures if they satisfy the following properties:
\begin{itemize}
    \item \textbf{Convexity}: $\rho_t(\lambda h + (1-\lambda) h') \leq \lambda \rho_t(h) + (1-\lambda) \rho_t(h')$, \quad for all $\lambda \in (0,1)$ and all $h, h' \in \mathcal{H}_t$.
    \item \textbf{Monotonicity}: If $h \leq h'$, then $\rho_t(h) \leq \rho_t(h')$, for all $h, h' \in \mathcal{H}_t$.
    \item \textbf{Translational Invariance}: $\rho_t(h + c) = c + \rho_t(h)$, for all $h \in \mathcal{H}_{t}$ and $c \in \mathbb{R}$.
    \item \textbf{Positive Homogeneity}: $\rho_t(\beta h) = \beta \rho_t(h)$, for all $h \in \mathcal{H}_t$ and $\beta \geq 0$.
    \item \textbf{Normalization}: $\rho_t(0) = 0$.
\end{itemize}
\end{definition}
All risk measures discussed in this paper are time-consistent and coherent. Below, we briefly review conditional value-at-risk as one key example of coherent risk measures.

\textbf{Conditional Value-at-Risk (CVaR)}
Let $h \in \mathcal{H}$ be a stochastic variable where higher values indicate safer performance. Since we aim to prevent $h$ from becoming negative (as seen in Eqn. \eqref{eq:high-order-safety-sets}), we treat negative values of $h$ as a loss and measure left-tail risk using Conditional Value-at-Risk (CVaR) or Value-at-Risk (VaR) at the lower quantile. For a confidence level $\beta \in (0,1)$, VaR is defined as:
\begin{equation}
    \text{VaR}_\beta(h) = \sup \{ \zeta \in \mathbb{R} \mid \mathbb{P}(h \leq \zeta) \leq \beta \}.
\end{equation}
However, VaR is numerically unstable for non-normal stochastic variables, and optimization problems involving VaR become intractable in high-dimensional settings. Moreover, VaR ignores the magnitude of values below the $\beta$-quantile \cite{rockafellar2000optimization}. In contrast, CVaR addresses these issues by considering the expected loss in the $\beta$-tail. The standard CVaR with confidence level $\beta$ is given by:
\begin{equation}
\label{eq:standard CVaR}
    \text{CVaR}_\beta(h) = \mathbb{E} [h \mid h \leq \text{VaR}_\beta(h)].
\end{equation}
An alternative formulation introduced in \cite{rockafellar2000optimization} is:
\begin{equation}
\label{eq:better CVaR}
    \text{CVaR}_\beta(h) = - \inf_{\zeta \in \mathbb{R}} \mathbb{E} \left[ \zeta + \frac{( - h - \zeta )^+}{\beta} \right].
\end{equation}
This formulation is used in this paper instead of \eqref{eq:standard CVaR} because it is more tractable for optimization and ensures time consistency in sequential decision-making. The standard CVaR formula \eqref{eq:standard CVaR} is non-convex and difficult to optimize, whereas Eq. \eqref{eq:better CVaR} reformulates CVaR in a convex and computationally efficient way. Additionally, the standard CVaR definition does not extend naturally to multi-stage problems, making it unsuitable for risk-aware control applications. Equation \eqref{eq:better CVaR} provides a flexible framework that integrates well with dynamic models, making it more practical for real-world safety-critical systems. A confidence level $\beta \to 1$ corresponds to a risk-neutral case, where $\text{CVaR}_1(h) = \mathbb{E}[h]$. Conversely, $\beta \to 0$ results in a risk-averse measure, where $\text{CVaR}_0(h) = \text{VaR}_0(h) = \operatorname{ess} \inf(h)$ \cite{rockafellar2002conditional}. 

\section{Problem Formulation and Approach}
\label{sec:Problem Formulation and Approach}
Consider a discrete-time stochastic control system:
\begin{equation}
\label{eq:disturbed system}
    \mathbf{x}_{t+1} = f(\mathbf{x}_t, \mathbf{u}_t, \mathbf{w}_t),
\end{equation}
where $t \in \mathbb{N}_{\geq 0}$ denotes the time index,  
$\mathbf{x}_t \in \mathcal{X} \subset \mathbb{R}^n$ is the state,  
$\mathbf{u}_t \in \mathcal{U} \subset \mathbb{R}^m$ is the control input,  
$\mathbf{w}_t \in \mathcal{W}$ represents the stochastic uncertainty/disturbance,  
and the function $f: \mathbb{R}^n \times \mathcal{U} \times \mathcal{W} \to \mathbb{R}^n$.  
We assume that the initial condition $x_0$ is deterministic and that $|\mathcal{W}|$ is finite, i.e., $\mathcal{W} = \{\mathbf{w}_1, \dots, \mathbf{w}_{|\mathcal{W}|}\}$.
At every time step $t$, for a state-control pair $(\mathbf{x}_t, \mathbf{u}_t)$, the process disturbance $\mathbf{w}_t$ is drawn from the set $\mathcal{W}$ according to the probability mass function  
$p(\mathbf{w}) = [p(\mathbf{w}_1), \dots, p(\mathbf{w}_{|\mathcal{W}|})]^T$, where $p(\mathbf{w}_i) := \mathbb{P}(\mathbf{w}_t = \mathbf{w}_i), i = 1, 2, \dots, |\mathcal{W}|$. The probability mass function for the process disturbance remains constant over time and is independent of both the process history and $(\mathbf{x}_t, \mathbf{u}_t)$.
\textbf{Objective:} (Minimizing cost) We consider the following cost for system \eqref{eq:disturbed system}:  
\begin{equation}
\label{eq:cost-function-1}
\begin{split}
J(\mathbf{u}_{t},\mathbf{x}(\mathbf{u}_t))=\mathbf{u}_{t}^{T}Q\mathbf{u}_{t}+(\mathbf{x}(\mathbf{u}_t)-\mathbf{x}_{e})^{T}R(\mathbf{x}(\mathbf{u}_t)-\mathbf{x}_{e}),
\end{split}
\end{equation}
where $\mathbf{x}_{e}$ denotes a reference state, $0\le t\le t_T, t \in \mathbb{N}$, and $Q,R$ are weight matrices with positive diagonal entries.

\textbf{Safety Requirement:} System \eqref{eq:disturbed system} should always satisfy a safety requirement of the form: 
\begin{equation}
\label{eq:Safety constraint}
h(\mathbf{x}_{t})\ge 0, 0\le t\le t_T, t \in \mathbb{N},
\end{equation}
where $h:\mathbb{R}^{n}\to\mathbb{R}$ is assumed to be a continuous function and has relative degree $m\in \mathbb{N}$ with respect to system \eqref{eq:disturbed system}.

\textbf{Control Limitation Requirement:} The controller $\mathbf{u}_{t}$ should always satisfy $\mathbf{u}_{t}\in \mathcal U \subset \mathbb{R}^{q}$ for all $0\le t\le t_T$.

A control policy is \textbf{feasible} if all constraints derived from safety requirement and control limitation requirement are satisfied and mutually non-conflicting for all $0\le t\le t_T$. In this paper, we consider the following problem:

\begin{problem}
\label{prob:opt-prob}
Find a feasible control policy for system \eqref{eq:disturbed system} and cost \eqref{eq:cost-function-1} is minimized.
\end{problem}

 \textbf{Approach:} Our method for solving Problem \ref{prob:opt-prob} is based on the coherent risk measure introduced at the Sec. \ref{subsec: Coherent Risk Measure}. We define Discrete-Time Auxiliary-Variable Adaptive CBFs (DAVCBFs) to enforce the safety constraint \eqref{eq:Safety constraint} by introducing auxiliary variables that construct an adaptive DCBF constraint. These auxiliary variables are controlled by auxiliary inputs, which act as relaxation variables to improve the feasibility of the DCBF-based optimization problem under control bounds $\mathbf{u}_{t}\in \mathcal U \subset \mathbb{R}^{q}$. We show that DHOCBF is a special case of DAVCBF, meaning DAVCBFs can enforce safety for stochastic constraints with a relative degree higher than one.
To assess and enforce risk-sensitive safety, we introduce Risk-Aware Adaptive CBFs (RACBFs), which incorporate risk measures into the DCBF framework to account for stochastic uncertainties while enforcing safety constraints.
 
\section{Risk-Aware Adaptive Control Barrier Functions}
\label{sec:RACBFs}
\subsection{Enhancing Optimization Feasibility with DAVCBFs}
Motivated by the auxiliary-variable adaptive CBFs in \cite{liu2023auxiliary}, given a function $h:\mathbb{R}^{n}\to\mathbb{R}$ with relative degree $m$ for system \eqref{eq:discrete-dynamics}, and a time-varying vector of auxiliary variables $\boldsymbol{a}_t \coloneqq [a_{1,t},\dots,a_{m,t}]^{T}$ with positive components, we place one auxiliary variable in front of each function $\psi_{i}(\mathbf{x})$ in \eqref{eq:high-order-discrete-CBFs}. If each auxiliary variable is ensured to always be positive as $a_{i,t} > 0, \quad i \in \{1, \dots, m\}$, then multiplying the auxiliary variable $a_{i,t}$ in front of $\psi_{i-1}(\mathbf{x})$ is equivalent to enforcing $\psi_{i-1}(\mathbf{x}) \geq 0$. Therefore, the forward invariance of the intersection of a sequence of sets $\mathcal{C}_{i}$ defined by \eqref{eq:high-order-safety-sets} can still be rendered with auxiliary variables. To ensure that $a_{i,t}$ remains positive, we define a DHOCBF for each $a_{i,t}$ and introduce auxiliary systems containing auxiliary states $\boldsymbol{\pi}_{i,t}$ and inputs $\nu_{i,t}$. Through these systems, we extend each DHOCBF to the desired relative degree, similar to how $h(\mathbf{x})$ has relative degree $m$ with respect to the dynamics \eqref{eq:discrete-dynamics}. Consider $m$ auxiliary systems in the form:
\begin{equation}
\label{eq:virtual-system}
\boldsymbol{\pi}_{i,t+1}=F_{i}(\boldsymbol{\pi}_{i,t}, \nu_{i,t}), i \in \{1,...,m\},
\end{equation}
where $\boldsymbol{\pi}_{i, t}\coloneqq [\pi_{i,1,t},\dots,\pi_{i,m+1-i,t}]^{T}\in \mathbb{R}^{m+1-i}$ denotes an auxiliary state with $\pi_{i,j,t}\in \mathbb{R}, j \in \{1,...,m+1-i\}.$ $\nu_{i,t}\in \mathbb{R}$ denotes an auxiliary input for \eqref{eq:virtual-system}, $F_{i}:\mathbb{R}^{m+1-i}\times \mathbb{R}\to \mathbb{R}^{m+1-i}$ 
is locally Lipschitz. For simplicity, we just build up the connection between an auxiliary variable and the system as $a_{i,t}=\pi_{i,1,t}, \pi_{i,1,t+1}-\pi_{i,1,t}=\pi_{i,2,t},\dots,\pi_{i,m-i,t+1}-\pi_{i,m-i,t}=\pi_{i,m+1-i,t}$ and make $\pi_{i,m+1-i,t+1}-\pi_{i,m+1-i,t}=\nu_{i,t}$, then we can define many specific DHOCBFs $h_{i}$ to enable $a_{i,t}$ to be positive. 
Given a function $h_{i}:\mathbb{R}^{m+1-i}\to\mathbb{R},$ we can define a sequence of functions $\varphi_{i,j}:\mathbb{R}^{m+1-i}\to\mathbb{R}, i \in\{1,...,m\}, j \in\{1,...,m+1-i\}:$
\begin{equation}
\label{eq:virtual-DHOCBFs}
\varphi_{i,j}(\boldsymbol{\pi}_{i, t})\coloneqq \bigtriangleup \varphi_{i,j-1}(\boldsymbol{\pi}_{i, t})+\alpha_{i,j}(\varphi_{i,j-1}(\boldsymbol{\pi}_{i, t})),
\end{equation}
where $\varphi_{i,0}(\boldsymbol{\pi}_{i,t})\coloneqq h_{i}(\boldsymbol{\pi}_{i,t})$, $\bigtriangleup \varphi_{i,j-1}(\boldsymbol{\pi}_{i,t})\coloneqq \varphi_{i,j-1}(\boldsymbol{\pi}_{i,t+1})-\varphi_{i,j-1}(\boldsymbol{\pi}_{i,t})$, $\alpha_{i,j}(\cdot)$ are class $\kappa$ functions that satisfy $\alpha_{i,j}(\varphi_{i,j-1}(\boldsymbol{\pi}_{i,t}))\le \varphi_{i,j-1}(\boldsymbol{\pi}_{i,t})$. Sets $\mathcal{B}_{i,j}$ are defined as
\begin{equation}
\label{eq:virtual-sets}
\mathcal B_{i,j}\coloneqq \{\boldsymbol{\pi}_{i,t}\in\mathbb{R}^{m+1-i}:\varphi_{i,j}(\boldsymbol{\pi}_{i,t})>0\}, \ j\in \{0,...,m-i\}. 
\end{equation}
Let $\varphi_{i,j}(\boldsymbol{\pi}_{i}),\ j\in \{1,...,m+1-i\}$ and $\mathcal B_{i,j},\ j\in \{0,...,m-i\}$ be defined by \eqref{eq:virtual-DHOCBFs} and \eqref{eq:virtual-sets} respectively. By Def. \ref{def:high-order-discrete-CBFs}, a function $h_{i}:\mathbb{R}^{m+1-i}\to\mathbb{R}$ is a DHOCBF with relative degree $m+1-i$ for system \eqref{eq:virtual-system} if there exist class $\kappa$ functions $\alpha_{i,j},\ j\in \{1,...,m+1-i\}$ as in \eqref{eq:virtual-DHOCBFs} such that
\begin{small}
\begin{equation}
\label{eq:highest-SHOCBF}
\begin{split}
\sup_{\nu_{i,t}\in \mathbb{R}}[\sum_{k=0}^{j_{m}}[(-1)^{j_{m}-k}\binom{j_{m}}{j_{m}-k}\varphi_{i,0}(\boldsymbol{\pi}_{i,t+k})]+
\sum_{n=1}^{j_{m}}[\sum_{k=0}^{j_{m}-n}[\\
(-1)^{j_{m}-n-k}\binom{j_{m}-n}{k}\alpha_{i,n}(\varphi_{i,n-1}(\boldsymbol{\pi}_{i, t+k}))]]] \ge \epsilon,
\end{split}
\end{equation}
\end{small}
where $\epsilon$ is a positive constant which can be infinitely small, the auxiliary input $\nu_{i,t}$ will appear in $\varphi_{i,0}(\boldsymbol{\pi}_{i,t+j_{m}})$ and $j_{m}=m+1-i$. 
\begin{remark}
\label{rem:safety-guarantee-2}
Eqn. \eqref{eq:highest-SHOCBF} and $\varphi_{i,m+1-i}(\boldsymbol{\pi}_{i, t})\ge \epsilon$ are equivalent. If this inequality constraint is satisfied, we have $\bigtriangleup \varphi_{i,m-i}(\boldsymbol{\pi}_{i, t})+\alpha_{i,m+1-i}(\varphi_{i,m-i}(\boldsymbol{\pi}_{i, t}))\ge \epsilon$. This can be rewritten as
\begin{equation}
\label{eq:virtual-sets1}
\varphi_{i,m-i}(\boldsymbol{\pi}_{i, t+1})-\varphi_{i,m-i}(\boldsymbol{\pi}_{i, t})+\alpha_{i,m+1-i}(\varphi_{i,m-i}(\boldsymbol{\pi}_{i, t}))\ge \epsilon.
\end{equation}
Since $\delta_{i,m-i,t} =\varphi_{i,m-i}(\boldsymbol{\pi}_{i, t})-\alpha_{i,m+1-i}(\varphi_{i,m-i}(\boldsymbol{\pi}_{i, t}))+\epsilon>0$, we have $\varphi_{i,m-i}(\boldsymbol{\pi}_{i, t+1})\ge \delta_{i,m-i,t} >0$. Recursively we have $\varphi_{i,m-i}(\boldsymbol{\pi}_{i, t+k})\ge \delta_{i,m-i,t+k-1} >0$ where $k \in \{1,2,\dots\}$. Similarly, we have 
\begin{equation}
\begin{split}
\label{eq:virtual-sets2}
\varphi_{i,m-1-i}(\boldsymbol{\pi}_{i, t+k})-\varphi_{i,m-1-i}(\boldsymbol{\pi}_{i, t+k-1})+\\
\alpha_{i,m-i}(\varphi_{i,m-1-i}(\boldsymbol{\pi}_{i, t+k-1}))>0,
\end{split}
\end{equation}
then we have $\varphi_{i,m-1-i}(\boldsymbol{\pi}_{i, t+k})>0$, $k \in \{1,2,\dots\}$. Repeatedly we have $\varphi_{i,0}(\boldsymbol{\pi}_{i, t+k})>0$, $k \in \{1,2,\dots\}$, therefore the sets $\mathcal {B}_{i,0},\dots,\mathcal {B}_{i,m-i}$ are forward invariant.
\end{remark}
For simplicity, we can make $h_{i}(\boldsymbol{\pi}_{i,t})=\pi_{i,1,t}=a_{i,t}.$ Based on Rem. \ref{rem:safety-guarantee-2}, each $a_{i,t}$ will be positive.
The remaining question is how to define an adaptive DHOCBF to guarantee $h(\mathbf{x}) \geq 0$ with the assistance of auxiliary variables. Let $\boldsymbol{\Pi}_t \coloneqq [\boldsymbol{\pi}_{1,t},\dots,\boldsymbol{\pi}_{m,t}]^{T}, \boldsymbol{\nu}_t \coloneqq [\nu_{1,t},\dots,\nu_{m,t}]^{T}$ denote the auxiliary states and control inputs of system \eqref{eq:virtual-system}. We can define a sequence of functions as follows:

\begin{small}
\begin{equation}
\label{eq:DAVCBF-sequence}
\begin{split}
&\psi_{0}(\mathbf{x}_t,\boldsymbol{\Pi}_t)\coloneqq a_{1,t}h(\mathbf{x}_t),\\
&\psi_{i}(\mathbf{x}_t,\boldsymbol{\Pi}_t)\coloneqq a_{i+1,t}(\bigtriangleup \psi_{i-1}(\mathbf{x}_t,\boldsymbol{\Pi}_t)+\alpha_{i}(\psi_{i-1}(\mathbf{x}_t,\boldsymbol{\Pi}_t))),
\end{split}
\end{equation}
\end{small}
where $i \in \{1,...,m-1\}$, $\bigtriangleup \psi_{i-1}(\mathbf{x}_{t}, \boldsymbol{\Pi}_t)\coloneqq \psi_{i-1}(\mathbf{x}_{t+1},\boldsymbol{\Pi}_{t+1})-\psi_{i-1}(\mathbf{x}_{t},\boldsymbol{\Pi}_t)$. $\psi_{m}(\mathbf{x}_t, \boldsymbol{\Pi}_t)\coloneqq \bigtriangleup \psi_{m-1}(\mathbf{x}_t,\boldsymbol{\Pi}_t)+\alpha_{m}(\psi_{m-1}(\mathbf{x}_t,\boldsymbol{\Pi}_t)).$ We further define a sequence of sets $\mathcal{C}_{i}$ associated with \eqref{eq:DAVCBF-sequence} in the form 
\begin{equation}
\label{eq:DAVCBF-set}
\begin{split}
\mathcal C_{i}\coloneqq \{(\mathbf{x}_t,\boldsymbol{\Pi}_t) \in \mathbb{R}^{n} \times \mathbb{R}^{\frac{m(m+1)}{2} }:\psi_{i}(\mathbf{x}_t,\boldsymbol{\Pi}_t)\ge 0\}, 
\end{split}
\end{equation}
where $i \in \{0,...,m-1\}.$
Since $a_{i,t}$ is a DHOCBF with relative degree $m+1-i$ for \eqref{eq:virtual-system}, based on \eqref{eq:highest-SHOCBF}, we define a constraint set $\mathcal{U}_{\boldsymbol{a}}$ for $\boldsymbol{\nu}_t$ as follows: 
\begin{small}
\begin{equation}
\label{eq:constraint-up}
\begin{split}
\mathcal{U}_{\boldsymbol{a}}(\boldsymbol{\Pi}_t)\coloneqq \{\boldsymbol{\nu}_t\in\mathbb{R}^{m}:  \sum_{k=0}^{j_{m}}[(-1)^{j_{m}-k}\binom{j_{m}}{j_{m}-k}\varphi_{i,0}(\boldsymbol{\pi}_{i,t+k})]+\\
\sum_{n=1}^{j_{m}}[\sum_{k=0}^{j_{m}-n}[
(-1)^{j_{m}-n-k}\binom{j_{m}-n}{k}\alpha_{i,n}(\varphi_{i,n-1}(\boldsymbol{\pi}_{i, t+k}))]],\\ i\in \{1,\dots,m\}\},
\end{split}
\end{equation}
\end{small}
where $j_{m}=m+1-i, \varphi_{i,n-1}(\cdot)$ is defined similar to \eqref{eq:virtual-DHOCBFs} and $a_{i, t}$ is ensured positive. $\epsilon$ is a positive constant which can be infinitely small. 

\begin{definition}[DAVCBF]
\label{def:DAVCBF}
Let $\psi_{i}(\mathbf{x},\boldsymbol{\Pi}),\ i\in \{1,...,m\}$ be defined by \eqref{eq:DAVCBF-sequence} and $\mathcal C_{i},\ i\in \{0,...,m-1\}$ be defined by \eqref{eq:DAVCBF-set}. A function $h(\mathbf{x}):\mathbb{R}^{n}\to\mathbb{R}$ is a Discrete-Time Auxiliary-Variable Adaptive Control Barrier Function (DAVCBF) with relative degree $m$ for system \eqref{eq:discrete-dynamics} if every $a_{i,t},i\in \{1,...,m\}$ is a
DHOCBF with relative degree $m+1-i$ for the auxiliary system
\eqref{eq:virtual-system}, and there exist class $\kappa$ functions $\alpha_{j},j\in \{1,...,m\}$  and Lipschitz controller $\mathbf{u}_{t}$ s.t.
\begin{small}
\begin{equation}
\label{eq:highest-DAVCBF}
\begin{split}
\sup_{\mathbf{u}_t\in \mathcal{U},\boldsymbol{\nu}_t\in \mathcal{U}_{\boldsymbol{a}}}[\sum_{k=0}^{m}[(-1)^{m-k}\binom{m}{m-k}\psi_{0}(\mathbf{x}_{t+k},\boldsymbol{\Pi}_{t+k})]+
\sum_{n=1}^{m}[\sum_{k=0}^{m-n}[\\
(-1)^{m-n-k}\binom{m-n}{k}\alpha_{n}(\psi_{n-1}(\mathbf{x}_{t+k},\boldsymbol{\Pi}_{t+k}))]]] \ge 0,
\end{split}
\end{equation}
\end{small}
$\alpha_{j}(\psi_{j-1}(\mathbf{x}_{t},\boldsymbol{\Pi}_{t}))\le \psi_{j-1}(\mathbf{x}_{t},\boldsymbol{\Pi}_{t})$, $\forall (\mathbf{x}_{t},\boldsymbol{\Pi}_{t})\in \mathcal C_{0}\cap,...,\cap \mathcal C_{m-1}$ and each $a_{i,t}>0, i\in\{1,\dots,m\}$. Note that the input $\mathbf{u}_t$ and auxiliary input $\boldsymbol{\nu}_t$ will appear in $\psi_{0}(\mathbf{x}_{t+m},\boldsymbol{\Pi}_{t+m})$ and Eqn. \eqref{eq:highest-DAVCBF} is equivalent to $\psi_{m}(\mathbf{x}_t, \boldsymbol{\Pi}_t)\ge0$.
\end{definition}
\begin{theorem}
\label{thm:safety-guarantee-3}
Given a DAVCBF $h(\mathbf{x})$ from Def. \ref{def:DAVCBF} with corresponding sets $\mathcal{C}_{0}, \dots,\mathcal {C}_{m-1}$ defined by \eqref{eq:DAVCBF-set}, if $(\mathbf{x}_0,\boldsymbol{\Pi}_0) \in \mathcal {C}_{0}\cap \dots \cap \mathcal {C}_{m-1},$ then if there exists a Lipschitz continuous controller $(\mathbf{u}_t,\boldsymbol{\nu}_t)$ that satisfies the constraint in \eqref{eq:highest-DAVCBF} and also ensures $(\mathbf{x}_t,\boldsymbol{\Pi}_t)\in \mathcal {C}_{m-1}$ for all $t \in \mathbb{N}_{\geq 0}$, then $\mathcal {C}_{0}\cap \dots \cap \mathcal {C}_{m-1}$ will be rendered forward invariant for system \eqref{eq:discrete-dynamics}, $i.e., (\mathbf{x}_t,\boldsymbol{\Pi}_t) \in \mathcal {C}_{0}\cap \dots \cap \mathcal {C}_{m-1}, \forall t \in \mathbb{N}_{\geq 0}$. Moreover, $h(\mathbf{x}_t)\ge 0$ is ensured for all $t \in \mathbb{N}_{\geq 0}$
\end{theorem}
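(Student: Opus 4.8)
The plan is to prove both conclusions of the theorem by a single induction on $t$ over the statement $P(t)$ that $\psi_{i}(\mathbf{x}_{t},\boldsymbol{\Pi}_{t})\ge 0$ for all $i\in\{0,\dots,m-1\}$, equivalently $(\mathbf{x}_{t},\boldsymbol{\Pi}_{t})\in\mathcal{C}_{0}\cap\dots\cap\mathcal{C}_{m-1}$; the base case $P(0)$ is exactly the hypothesis on the initial condition. The ingredients fed into the inductive step are: (i) the equivalence noted after Def.~\ref{def:DAVCBF} between the controller satisfying \eqref{eq:highest-DAVCBF} and $\psi_{m}(\mathbf{x}_{t},\boldsymbol{\Pi}_{t})\ge 0$; (ii) the recursions \eqref{eq:DAVCBF-sequence}, solved for the next-step value $\psi_{i-1}(\mathbf{x}_{t+1},\boldsymbol{\Pi}_{t+1})$; and (iii) the defining inequality $\alpha_{j}(\psi_{j-1}(\mathbf{x}_{t},\boldsymbol{\Pi}_{t}))\le\psi_{j-1}(\mathbf{x}_{t},\boldsymbol{\Pi}_{t})$ valid on $\mathcal{C}_{0}\cap\dots\cap\mathcal{C}_{m-1}$ from Def.~\ref{def:DAVCBF}.

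First I would settle positivity of the auxiliary variables. Since each $a_{i,t}$ is a DHOCBF with relative degree $m+1-i$ for the auxiliary system \eqref{eq:virtual-system} and the chosen $\boldsymbol{\nu}_{t}$ lies in $\mathcal{U}_{\boldsymbol{a}}(\boldsymbol{\Pi}_{t})$ of \eqref{eq:constraint-up} (which is just the constraint \eqref{eq:highest-SHOCBF} imposed for every $i$), Remark~\ref{rem:safety-guarantee-2} gives forward invariance of the sets $\mathcal{B}_{i,0},\dots,\mathcal{B}_{i,m-i}$, hence $a_{i,t}=\varphi_{i,0}(\boldsymbol{\pi}_{i,t})=h_{i}(\boldsymbol{\pi}_{i,t})>0$ for all $t$ and all $i\in\{1,\dots,m\}$. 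Consequently every coefficient $a_{i+1,t}$ that appears in \eqref{eq:DAVCBF-sequence} is strictly positive, so each recursion may be divided through by it without reversing an inequality.

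For the inductive step, assume $P(t)$, so that $(\mathbf{x}_{t},\boldsymbol{\Pi}_{t})$ lies in the intersection and ingredient (iii) applies at time $t$ for every $j$. The controller yields $\psi_{m}(\mathbf{x}_{t},\boldsymbol{\Pi}_{t})=\psi_{m-1}(\mathbf{x}_{t+1},\boldsymbol{\Pi}_{t+1})-\psi_{m-1}(\mathbf{x}_{t},\boldsymbol{\Pi}_{t})+\alpha_{m}(\psi_{m-1}(\mathbf{x}_{t},\boldsymbol{\Pi}_{t}))\ge 0$, which together with $\alpha_{m}(\psi_{m-1}(\mathbf{x}_{t},\boldsymbol{\Pi}_{t}))\le\psi_{m-1}(\mathbf{x}_{t},\boldsymbol{\Pi}_{t})$ forces $\psi_{m-1}(\mathbf{x}_{t+1},\boldsymbol{\Pi}_{t+1})\ge 0$. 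Then for $i=m-1,m-2,\dots,1$ in succession I would rewrite \eqref{eq:DAVCBF-sequence} as $\psi_{i-1}(\mathbf{x}_{t+1},\boldsymbol{\Pi}_{t+1})=\psi_{i}(\mathbf{x}_{t},\boldsymbol{\Pi}_{t})/a_{i+1,t}+\psi_{i-1}(\mathbf{x}_{t},\boldsymbol{\Pi}_{t})-\alpha_{i}(\psi_{i-1}(\mathbf{x}_{t},\boldsymbol{\Pi}_{t}))$; the first term is nonnegative by $P(t)$ and positivity of $a_{i+1,t}$, and the remaining difference is nonnegative by ingredient (iii), so $\psi_{i-1}(\mathbf{x}_{t+1},\boldsymbol{\Pi}_{t+1})\ge 0$. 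Running $i$ down to $1$ gives $\psi_{i}(\mathbf{x}_{t+1},\boldsymbol{\Pi}_{t+1})\ge 0$ for all $i\in\{0,\dots,m-1\}$, i.e. $P(t+1)$; this is the asserted forward invariance, and from $\psi_{0}(\mathbf{x}_{t},\boldsymbol{\Pi}_{t})=a_{1,t}h(\mathbf{x}_{t})\ge 0$ with $a_{1,t}>0$ one reads off $h(\mathbf{x}_{t})\ge 0$ for every $t$.

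The step I expect to be the main obstacle is the bookkeeping forced by ingredient (iii): because $\alpha_{j}(\psi_{j-1})\le\psi_{j-1}$ is guaranteed only on $\mathcal{C}_{0}\cap\dots\cap\mathcal{C}_{m-1}$ and not pointwise, the induction cannot be done level by level but must carry all $m$ levels $\psi_{0},\dots,\psi_{m-1}$ simultaneously, and one must verify that the extra requirement on the controller that $(\mathbf{x}_{t},\boldsymbol{\Pi}_{t})\in\mathcal{C}_{m-1}$ is consistent with (indeed implied by) the invariance being established. A secondary point is to make precise that a controller satisfying the constraint in \eqref{eq:highest-DAVCBF} is required to choose $\boldsymbol{\nu}_{t}\in\mathcal{U}_{\boldsymbol{a}}(\boldsymbol{\Pi}_{t})$, which is what legitimizes invoking Remark~\ref{rem:safety-guarantee-2} to obtain $a_{i,t}>0$.
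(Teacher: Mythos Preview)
Your argument is correct and uses the same ingredients as the paper's proof, but organizes them differently. The paper takes the hypothesis $(\mathbf{x}_{t},\boldsymbol{\Pi}_{t})\in\mathcal{C}_{m-1}$, i.e.\ $\psi_{m-1}\ge 0$ for all $t$, as its starting point and then descends \emph{level by level}: dividing by $a_{m,t}>0$ and arguing as in Remark~\ref{rem:safety-guarantee-2} gives $\psi_{m-2}\ge 0$ for all $t$; dividing by $a_{m-1,t}$ and repeating gives $\psi_{m-3}\ge 0$ for all $t$; and so on down to $\psi_{0}$. You instead run a single induction on $t$ that carries all $m$ levels at once, bootstrapping from the constraint $\psi_{m}\ge 0$ of \eqref{eq:highest-DAVCBF} rather than from the assumed $\mathcal{C}_{m-1}$ membership. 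The two are reorderings of the same nested induction, but your version has two small advantages: it makes explicit that the bound $\alpha_{j}(\psi_{j-1})\le\psi_{j-1}$ is invoked only at points already known to lie in $\mathcal{C}_{0}\cap\dots\cap\mathcal{C}_{m-1}$, and it shows (as you note) that the extra controller hypothesis $(\mathbf{x}_{t},\boldsymbol{\Pi}_{t})\in\mathcal{C}_{m-1}$ is in fact redundant, being a consequence of $\psi_{m}\ge 0$ together with the initial condition.
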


\begin{proof}
If $h(\mathbf{x})$ is a DAVCBF, then satisfying the constraint in \eqref{eq:highest-DAVCBF} while ensuring $(\mathbf{x}_t,\boldsymbol{\Pi}_t)\in \mathcal {C}_{m-1}$ for all $t \in \mathbb{N}_{\geq 0}$ is equivalent to making $\psi_{m-1}(\mathbf{x}_t,\boldsymbol{\Pi}_t)\ge 0, \forall t \in \mathbb{N}_{\geq 0}$. Since $a_{m,t}>0$, we have $\frac{\psi_{m-1}(\mathbf{x}_t,\boldsymbol{\Pi}_t)}{a_{m,t}}\ge 0.$ Based on
\eqref{eq:DAVCBF-sequence}, since $(\mathbf{x}_0,\boldsymbol{\Pi}_0) \in \mathcal {C}_{m-2}$ (i.e., $\frac{\psi_{m-2}(\mathbf{x}_0,\boldsymbol{\Pi}_0)}{a_{m-1,0}}\ge 0),a_{m-1,t}>0,$ then we have $\psi_{m-2}(\mathbf{x}_t,\boldsymbol{\Pi}_t)\ge 0$ (This proof is similar to the proof in Rem. \ref{rem:safety-guarantee-2}), and also $\frac{\psi_{m-2}(\mathbf{x}_t,\boldsymbol{\Pi}_t)}{a_{m-1,t}}\ge 0.$ Based on \eqref{eq:DAVCBF-sequence}, since $(\mathbf{x}_0,\boldsymbol{\Pi}_0) \in \mathcal {C}_{m-3},a_{m-2,t}>0$ then similarly we have $\psi_{m-3}(\mathbf{x}_t,\boldsymbol{\Pi}_t)\ge 0$ and $\frac{\psi_{m-3}(\mathbf{x}_t,\boldsymbol{\Pi}_t)}{a_{m-2,t}}\ge 0,\forall t \in \mathbb{N}_{\geq 0}$. Repeatedly, we have $\psi_{0}(\mathbf{x}_t,\boldsymbol{\Pi}_t)\ge 0$ and $\frac{\psi_{0}(\mathbf{x}_t,\boldsymbol{\Pi}_t)}{a_{1,t}}\ge 0,\forall t \in \mathbb{N}_{\geq 0}$. Therefore the intersection of sets $\mathcal {C}_{0},\dots,\mathcal {C}_{m-1}$ is forward invariant and $h(\mathbf{x}_t)=\frac{\psi_{0}(\mathbf{x}_t,\boldsymbol{\Pi}_t)}{a_{1,t}}\ge 0$ is ensured for all $t \in \mathbb{N}_{\geq 0}$.
\end{proof}
Based on Thm. \ref{thm:safety-guarantee-3}, the safety condition $h(\mathbf{x}_t) = \frac{\psi_{0}(\mathbf{x}_t,\boldsymbol{\Pi}_t))}{a_{1,t}} \geq 0$ is guaranteed. The auxiliary inputs $\boldsymbol{\nu}_t$ act as relaxation variables for the constraint \eqref{eq:highest-DAVCBF} and are unbounded, thereby improving the feasibility of the optimization problem.

\subsection{Risk Sensitive Forward Invariance}
In the presence of stochastic uncertainty $\mathbf{w}_t$, ensuring absolute invariance or safety is probably not feasible. Moreover, the uncertainties propagate dynamically over multiple steps until any component of the control input $\mathbf{u}_{t}$ explicitly appears, as shown in Def. \ref{def:relative-degree} and Eqn. \eqref{eq:disturbed system}. In this work, we propose ensuring the forward invariance of sets defined by DAVCBFs in the dynamic coherent risk measure sense to guarantee stochastic safety.
\begin{definition}[$\rho$-Forward Invariance]
\label{def:rho-Forward Invariance}
Given some sets $\mathcal C_{i}$, $i \in \{0,...,m-1\}$ defined by \eqref{eq:DAVCBF-set}, and a time-consistent, dynamic coherent risk measure $\rho_{0:t}$ expressed in \eqref{eq:rho-measure}, we refer to the solutions of \eqref{eq:disturbed system}, initialized at $\mathbf{x}_0 \in \mathcal C_{i}$, as $\rho$-forward invariance if and only if 
\begin{equation}
\label{eq:rho-Forward Invariance}
    \rho_{0:t} (0, 0, \dots, \psi_{i}(\mathbf{x}_t,\boldsymbol{\Pi}_t)) \geq 0, \quad \forall t \in \mathbb{N}_{\geq 0}.
\end{equation}
\end{definition}
Since $\rho_{0:t}$ generally represents a method for measuring risk, Eqn. \eqref{eq:rho-Forward Invariance} implies risk-measured forward invariance. Moreover, $\rho_{0:t}$ can be expressed in a risk-sensitive form, such as CVaR, as shown in \eqref{eq:better CVaR}. Therefore, Eqn. \eqref{eq:rho-Forward Invariance} can also imply risk-sensitive forward invariance.
\subsection{Ensuring Risk Sensitive Safety with RACBFs}
\begin{definition}[RACBF]
\label{def:risk-aware adaptive cbf}
Let $\psi_{i}(\mathbf{x},\boldsymbol{\Pi}),\ i\in \{1,...,m\}$ be defined by \eqref{eq:DAVCBF-sequence}, $\mathcal C_{i},\ i\in \{0,...,m-1\}$ be defined by \eqref{eq:DAVCBF-set} and a dynamic coherent risk measure $\rho$ be defined by Def. \ref{def:Coherent Risk Measure}. A function $h(\mathbf{x}):\mathbb{R}^{n}\to\mathbb{R}$ is a Risk-Aware Adaptive Control Barrier Function (RACBF) with relative degree $m$ for system \eqref{eq:disturbed system} if every $a_{i,t},i\in \{1,...,m\}$ is a
DHOCBF with relative degree $m+1-i$ for the auxiliary system
\eqref{eq:virtual-system}, and there exist class $\kappa$ functions $\alpha_{j},j\in \{1,...,m\}$ and Lipschitz controller $\mathbf{u}_{t}$ for $\psi_{i}(\mathbf{x}, \boldsymbol{\Pi})$ s.t.
\begin{equation}
\label{eq: RACBFs}
    \rho(\psi_{m}(\mathbf{x}_t, \boldsymbol{\Pi}_t))\ge 0,
\end{equation}
$\alpha_{j}(\psi_{j-1}(\mathbf{x}_{t},\boldsymbol{\Pi}_{t}))\le \psi_{j-1}(\mathbf{x}_{t},\boldsymbol{\Pi}_{t})$, 
 for all $(\mathbf{x}_{t},\boldsymbol{\Pi}_{t})$ belonging to the $\rho$-forward invariance of the intersection of sets $ \mathcal C_{0},..., \mathcal C_{m-1}$ and each $a_{i,t}>0, i\in\{1,\dots,m\}$. 
\end{definition}
\begin{theorem}
\label{thm:safety-guarantee-4}
Given a RACBF $h(\mathbf{x})$ from Def. \ref{def:risk-aware adaptive cbf} with corresponding sets $\mathcal{C}_{0}, \dots,\mathcal {C}_{m-1}$ defined by \eqref{eq:DAVCBF-set}, if $(\mathbf{x}_0,\boldsymbol{\Pi}_0) \in \mathcal {C}_{0}\cap \dots \cap \mathcal {C}_{m-1},$ then if there exists solution of Lipschitz controller $(\mathbf{u}_t,\boldsymbol{\nu}_t)$ that satisfies the constraint in \eqref{eq:rho-Forward Invariance} and also ensures $(\mathbf{x}_t,\boldsymbol{\Pi}_t)
$ belongs to $\rho$-forward invariance of $\mathcal {C}_{m-1}$ for all $t\ge 0,$ then the intersection of the sets $ \mathcal C_{0},..., \mathcal C_{m-1}$ will be rendered $\rho$-forward invariance for system \eqref{eq:disturbed system}. Moreover, $\rho(h(\mathbf{x}_t))\ge 0$ is ensured for all $t \in \mathbb{N}_{\geq 0}$.
\end{theorem}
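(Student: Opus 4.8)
The plan is to mirror the proof of Theorem~\ref{thm:safety-guarantee-3} step for step, replacing each pathwise inequality on the functions $\psi_i$ by the corresponding inequality on the nested risk measure $\rho_{0:t}$, and discharging the nesting with the coherence axioms of Def.~\ref{def:Coherent Risk Measure}. First I would record the top of the chain: for a RACBF, the controller $(\mathbf{u}_t,\boldsymbol{\nu}_t)$ realizing \eqref{eq: RACBFs}, i.e. $\rho(\psi_m(\mathbf{x}_t,\boldsymbol{\Pi}_t))\ge 0$ for all $t$, together with the hypothesis that $(\mathbf{x}_t,\boldsymbol{\Pi}_t)$ remains in the $\rho$-forward-invariant version of $\mathcal C_{m-1}$, yields \eqref{eq:rho-Forward Invariance} with $i=m-1$. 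As in Rem.~\ref{rem:safety-guarantee-2}, $\psi_m=\bigtriangleup\psi_{m-1}+\alpha_m(\psi_{m-1})$, so $\rho(\psi_m)\ge 0$ is the risk-measured statement that the one-step change of $\psi_{m-1}$ does not drive it below zero; combined with $\alpha_m(\psi_{m-1})\le\psi_{m-1}$ and monotonicity/translational invariance of $\rho$, it is consistent with the maintained $\rho$-forward invariance of $\mathcal C_{m-1}$.

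The core is a downward induction on $i$. Assume $\rho$-forward invariance of $\mathcal C_i$, i.e. $\rho_{0:t}(0,\dots,0,\psi_i(\mathbf{x}_t,\boldsymbol{\Pi}_t))\ge 0$ for all $t$, for some $i\in\{1,\dots,m-1\}$. From \eqref{eq:DAVCBF-sequence}, $\psi_i=a_{i+1,t}\bigl(\bigtriangleup\psi_{i-1}+\alpha_i(\psi_{i-1})\bigr)$ with $a_{i+1,t}>0$; since $a_{i+1,t}$ is produced by the disturbance-free auxiliary dynamics \eqref{eq:virtual-system} and is positive, I can strip it off by positive homogeneity of the one-step maps $\rho_k$, leaving $\rho_{0:t}\bigl(0,\dots,0,\bigtriangleup\psi_{i-1}(\mathbf{x}_t,\boldsymbol{\Pi}_t)+\alpha_i(\psi_{i-1}(\mathbf{x}_t,\boldsymbol{\Pi}_t))\bigr)\ge 0$. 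Rewriting the last argument as $\psi_{i-1}(\mathbf{x}_{t+1},\boldsymbol{\Pi}_{t+1})-\bigl(\psi_{i-1}(\mathbf{x}_t,\boldsymbol{\Pi}_t)-\alpha_i(\psi_{i-1}(\mathbf{x}_t,\boldsymbol{\Pi}_t))\bigr)$, using $\alpha_i(\psi_{i-1})\le\psi_{i-1}$ and the base fact $\psi_{i-1}(\mathbf{x}_0,\boldsymbol{\Pi}_0)\ge 0$ (from $(\mathbf{x}_0,\boldsymbol{\Pi}_0)\in\mathcal C_{i-1}$), I would run the recursion of Rem.~\ref{rem:safety-guarantee-2}/Thm.~\ref{thm:safety-guarantee-3} but propagated through the composition \eqref{eq:rho-measure}: monotonicity and translational invariance of each $\rho_k$ turn the step bound $\psi_{i-1}(\mathbf{x}_{t+1},\boldsymbol{\Pi}_{t+1})\ge\psi_{i-1}(\mathbf{x}_t,\boldsymbol{\Pi}_t)-\alpha_i(\psi_{i-1}(\mathbf{x}_t,\boldsymbol{\Pi}_t))\ge 0$ into $\rho_{0:t}(0,\dots,0,\psi_{i-1}(\mathbf{x}_t,\boldsymbol{\Pi}_t))\ge 0$ for all $t$, i.e. $\rho$-forward invariance of $\mathcal C_{i-1}$. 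Iterating from $i=m-1$ down to $i=1$ renders $\mathcal C_0\cap\dots\cap\mathcal C_{m-1}$ $\rho$-forward invariant, and since $\psi_0=a_{1,t}h$ with $a_{1,t}>0$, positive homogeneity finally gives $\rho(h(\mathbf{x}_t))=\rho\!\left(\psi_0(\mathbf{x}_t,\boldsymbol{\Pi}_t)/a_{1,t}\right)\ge 0$ for all $t\in\mathbb{N}_{\ge 0}$.

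I expect the main obstacle to be exactly this downward induction: in the deterministic case one divides $\psi_i\ge 0$ by the positive scalar $a_{i+1,t}$ and reads off a pathwise one-step inequality on $\psi_{i-1}$, whereas here $\psi_{i-1}(\mathbf{x}_{t+1},\boldsymbol{\Pi}_{t+1})$ is random and $\rho(\psi_m)\ge 0$ is not a pathwise statement, so the whole recursion must be carried out inside the nested composition \eqref{eq:rho-measure} using only convexity, monotonicity, positive homogeneity, translational invariance and normalization. Two points deserve care: (i) justifying that $a_{i+1,t}$ factors out of $\rho_{0:t}$ stagewise (it is positive and disturbance-free, hence predictable, so stagewise positive homogeneity applies); and (ii) checking that the one-step-change recursion of Rem.~\ref{rem:safety-guarantee-2} meshes with the time-consistency identity \eqref{eq:rho-measure}, so that a bound on $\rho_{0:t-1}$ at stage $t-1$ together with the stagewise constraint yields the bound on $\rho_{0:t}$ at stage $t$, with the $\mathcal F_{t+j}$-measurability of $\psi_j(\mathbf{x}_t,\boldsymbol{\Pi}_t)$ tracked correctly. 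The remaining work — the binomial expansion in \eqref{eq:highest-DAVCBF} collapsing to $\rho(\psi_m)\ge 0$ — is routine and already indicated after \eqref{eq:highest-DAVCBF}.
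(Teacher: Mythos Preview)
Your proposal is correct and follows the same downward induction as the paper's proof: start from $\rho$-forward invariance of $\mathcal C_{m-1}$, peel off the positive auxiliary factor $a_{i+1,t}$ via positive homogeneity, use monotonicity, translational invariance and normalization together with $\alpha_i(\psi_{i-1})\le\psi_{i-1}$ and the initial condition $(\mathbf{x}_0,\boldsymbol{\Pi}_0)\in\mathcal C_{i-1}$ to step down to $\mathcal C_{i-1}$, and iterate to $\mathcal C_0$ and then $\rho(h)\ge 0$. If anything, you are more careful than the paper: you explicitly track the distinction between the one-step map $\rho$ and the nested $\rho_{0:t}$, the predictability of $a_{i+1,t}$, and the role of time-consistency \eqref{eq:rho-measure}, all of which the paper's proof treats informally by writing $\rho(\cdot)$ throughout.
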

\begin{proof}
If $h(\mathbf{x})$ is a RACBF, then satisfying constraint in \eqref{eq:rho-Forward Invariance} while ensuring $(\mathbf{x}_t,\boldsymbol{\Pi}_t)$ belonging to $\rho$-forward invariance of $ \mathcal {C}_{m-1}$ for all $t \in \mathbb{N}_{\geq 0}$ is equivalent to make $\rho(\psi_{m-1}(\mathbf{x}_t,\boldsymbol{\Pi}_t))\ge 0, \forall t \in \mathbb{N}_{\geq 0}$. Since $a_{m,t}>0$, based on \eqref{eq:DAVCBF-sequence}, we have 
\begin{equation}
\begin{split}
    \rho(\frac{\psi_{m-1}(\mathbf{x}_t,\boldsymbol{\Pi}_t)}{a_{m,t}})=\rho(\bigtriangleup \psi_{m-2}(\mathbf{x}_t,\boldsymbol{\Pi}_t)+\\\alpha_{m-1}(\psi_{m-2}(\mathbf{x}_t,\boldsymbol{\Pi}_t))\ge 0.
\end{split}
\end{equation}
 Based on the properties in Def. \ref{def:Coherent Risk Measure} and $(\mathbf{x}_0,\boldsymbol{\Pi}_0) \in \mathcal {C}_{m-2}$, we have 
 \begin{equation}
\begin{split}
    \rho(\psi_{m-2}(\mathbf{x}_{t+1},\boldsymbol{\Pi}_{t+1}))\ge \rho(\psi_{m-2}(\mathbf{x}_{t},\boldsymbol{\Pi}_{t})-\\
    \alpha_{m-1}(\psi_{m-2}(\mathbf{x}_t,\boldsymbol{\Pi}_t)))\ge \rho(0)=0, \forall t \in \mathbb{N}_{\geq 0}.
\end{split}
\end{equation}
  Since $a_{m-1,t}>0$, we also have $\rho(\frac{\psi_{m-2}(\mathbf{x}_t,\boldsymbol{\Pi}_t)}{a_{m-1,t}})\ge 0.$ Based on \eqref{eq:DAVCBF-sequence}, the properties in Def. \ref{def:Coherent Risk Measure} and $(\mathbf{x}_0,\boldsymbol{\Pi}_0) \in \mathcal {C}_{m-3}$, then similarly we have $\rho(\psi_{m-3}(\mathbf{x}_t,\boldsymbol{\Pi}_t))\ge 0$ and $\rho(\frac{\psi_{m-3}(\mathbf{x}_t,\boldsymbol{\Pi}_t)}{a_{m-2,t}})\ge 0,\forall t \in \mathbb{N}_{\geq 0}$. Repeatedly, we have $\rho(\psi_{0}(\mathbf{x}_t,\boldsymbol{\Pi}_t))\ge 0$ and $\rho(\frac{\psi_{0}(\mathbf{x}_t,\boldsymbol{\Pi}_t)}{a_{1,t}})\ge 0,\forall t \in \mathbb{N}_{\geq 0}$. Therefore the intersection of sets $ \mathcal C_{0},..., \mathcal C_{m-1}$ is $\rho$-forward invariance and $\rho(h(\mathbf{x}_t))=\rho(\frac{\psi_{0}(\mathbf{x}_t,\boldsymbol{\Pi}_t)}{a_{1,t}})\ge 0$ is ensured for all $t \in \mathbb{N}_{\geq 0}$.
\end{proof}
Based on Thm. \ref{thm:safety-guarantee-4}, the risk sensitive safety condition $\rho(h(\mathbf{x}_t))\ge 0$ is enforced.
\subsection{Optimal Control with RACBFs}
Consider the cost \eqref{eq:cost-function-1} in the optimal control Problem \ref{prob:opt-prob}.  Since we need to introduce auxiliary inputs $\nu_{i}$ to enhance the feasibility of optimization, and we aim to incorporate CVaR \eqref{eq:better CVaR} as coherent risk measure to realize risk-aware control, we reformulate the cost in \eqref{eq:cost-function-1} as
\begin{small}
\begin{equation}
\label{eq:cost-function-2}
\begin{split}
J(\mathbf{u}_{t},\mathbf{x}(\mathbf{u}_{t}),\boldsymbol{\nu}_{t},\sigma_{i,t},\zeta_t )=\mathbf{u}_{t}^{T}Q\mathbf{u}_{t}+(\mathbf{x}(\mathbf{u}_{t})-\mathbf{x}_{e})^{T}R(\mathbf{x}(\mathbf{u}_{t})-\mathbf{x}_{e})\\
 +(\boldsymbol{\nu}_{t}-\mathbf{a}_{w})^{T}W(\boldsymbol{\nu}_{t}-\mathbf{a}_{w})+P(\zeta_t+\frac{1}{\beta|\mathcal{W}|}\sum_{i=1}^{|\mathcal{W}|}\sigma_{i,t})  .
\end{split}
\end{equation}
\end{small}
In \eqref{eq:cost-function-2}, $0\le t\le t_T, t \in \mathbb{N}$, $\mathbf{x}_{e}$ denotes a reference state, $\mathbf{a}_{w}$ is the vector to which we hope auxiliary input $\boldsymbol{\nu}_{t}$ converges, and $Q,R,W$ are weight matrices with positive diagonal entries. $P$ is a positive weight scalar for CVaR and the last term of \eqref{eq:cost-function-2} is a transformation of Eqn. \eqref{eq:better CVaR}. To enforce RACBFs in \eqref{eq: RACBFs} with CVaR, we construct reformulated RACBF constraints as follows:
\begin{equation}
\label{eq:reformulated RACBFs}
\begin{split}
-(\zeta_{t}+\frac{1}{\beta|\mathcal{W}|}\sum_{i=1}^{|\mathcal{W}|}\sigma_{i,t})\ge 0,\sigma_{i,t}\ge0, \\
\sigma_{i,t}\ge -\rho^{i}(\psi_{m}(\mathbf{x}_{t}, \boldsymbol{\Pi}_{t}))-\zeta_t, i\in \{1,...,|\mathcal{W}|\}.
\end{split}
\end{equation}
We formulate control bounds $\mathbf{u}_{t}\in \mathcal U \subset \mathbb{R}^{q}$, along with the DHOCBFs and reformulated RACBFs introduced in Eqn. \eqref{eq:constraint-up} and Eqn. \eqref{eq:reformulated RACBFs} as constraints in the nonlinear optimization problem, minimizing the cost function given in Eqn. \eqref{eq:cost-function-2}.

In the auxiliary system \eqref{eq:virtual-system}, if we define $a_{i,t} = \pi_{i,1,t} = 1$ and $\pi_{i,2,t} = \cdots = \pi_{i,m+1-i,t} = 0$, then the construction of functions and sets in \eqref{eq:virtual-DHOCBFs} and \eqref{eq:virtual-sets} aligns exactly with \eqref{eq:high-order-discrete-CBFs} and \eqref{eq:high-order-safety-sets}. This implies that the classical DHOCBF is, in fact, a special case of DAVCBF. Meanwhile, in \eqref{eq:highest-DAVCBF} we see that the $m^{\text{th}}$-order DAVCBF extends $\psi_{0}$ from time step $t$ to time step $t+m$, effectively granting the controller the ability to anticipate the future $m$ steps, enabling it to handle multi-step uncertainties that propagate across multiple stages.
Beyond ensuring safety and feasibility, another advantage of using DAVCBFs is that they help reduce the conservativeness of the control strategy. For example, from \eqref{eq:DAVCBF-sequence}, we can rewrite $\psi_{i}(\mathbf{x}_t,\boldsymbol{\Pi}_t)\ge 0$ as
\begin{equation}
\label{eq:DAVCBF-rewrite}
\begin{split}
\phi_{i-1}(\mathbf{x}_{t+1},\boldsymbol{\Pi}_{t+1})\ge\frac{a_{i,t}}{a_{i,t+1}}(1-\lambda_{i})\phi_{i-1}(\mathbf{x}_{t},\boldsymbol{\Pi}_{t}) ,
\end{split}
\end{equation}
where $\phi_{i-1}(\mathbf{x}_{t},\boldsymbol{\Pi}_{t})=\frac{\psi_{i-1}(\mathbf{x}_{t},\boldsymbol{\Pi}_{t})}{a_{i,t}},\alpha_{i}(\psi_{i-1}(\mathbf{x}_{t},\boldsymbol{\Pi}_{t}))=\lambda_{i}a_{i,t}\phi_{i-1}(\mathbf{x}_{t},\boldsymbol{\Pi}_{t}), 0<\lambda_{i}\le 1, i\in \{1,\dots,m\}$. The term $\frac{a_{i,t}}{a_{i,t+1}}$ can be adjusted to ameliorate the conservativeness of control strategy that $(1-\lambda_{i})\phi_{i-1}(\mathbf{x}_{t},\boldsymbol{\Pi}_{t})$ may have,  which allows for a broader range of feasible control inputs and confirms the adaptivity of DAVCBFs to control constraint and conservativeness of control strategy. The aforementioned benefits can be further amplified by formulating the optimal control problem in the model predictive control framework.
\begin{remark}
\label{rem: sufficient-con}
Note that satisfying the constraint in \eqref{eq:highest-DAVCBF} is a sufficient condition for ensuring the original constraint $\psi_{0}(\mathbf{x}_{t},\boldsymbol{\Pi}_{t}) > 0$. However, it is not necessary to introduce auxiliary variables for all terms from $a_{1,t}$ to $a_{m,t}$. This flexibility allows for selecting an appropriate number of auxiliary variables in the DAVCBF constraints to reduce computational complexity. In other words, the number of auxiliary variables can be less than or equal to the relative degree $m$.
\end{remark}
\begin{remark}
\label{rem: benefits of auxiliary variable}
Another approach to incorporating auxiliary variables into DCBFs is by multiplying a positive $a_{i,t}$ in front of each class $\kappa$ function. Using this method, we rewrite Eqn. \eqref{eq:DAVCBF-rewrite} as:
\begin{equation}
\label{eq:DAVCBF-rewrite-2}
\begin{split}
\phi_{i-1}(\mathbf{x}_{t+1},\boldsymbol{\Pi}_{t+1})\ge (1-\lambda_{i}a_{i,t})\phi_{i-1}(\mathbf{x}_{t},\boldsymbol{\Pi}_{t}).
\end{split}
\end{equation}
Compared to Eqn. \eqref{eq:DAVCBF-rewrite}, we observe that \( (1-\lambda_{i}a_{i,t}) \) has a range of \([- \infty, 1]\), while \( \frac{a_{i,t}}{a_{i,t+1}}(1-\lambda_{i})\phi_{i-1} \) ranges from \([0, +\infty]\). The form of Eqn. \eqref{eq:DAVCBF-rewrite} maximizes the flexibility in adjusting the parameters of the class \( \kappa \) function while eliminating negative parameters that may compromise safety.
\end{remark}
\section{CASE STUDY AND SIMULATIONS}
In this section, we present numerical results to validate our proposed approach using a unicycle robot modeled as a nonlinear discrete-time system. All computations and
simulations were conducted in MATLAB, where the nonlinear optimization problem was solved using IPOPT~\cite{biegler2009large} with the modeling language Yalmip~\cite{lofberg2004yalmip}.
Consider a stochastic discrete-time unicycle model in the form
\begin{equation}
\label{eq:unicycle-model}
\begin{bmatrix} x_{t+1}{-}x_t \\ y_{t+1}{-}y_t \\ \theta_{t+1}{-}\theta_t \\ v_{t+1}{-}v_t \end{bmatrix}{=}\begin{bmatrix} v_{t} \cos(\theta_{t}) \Delta t \\ v_{t}\sin(\theta_{t}) \Delta t \\ 0 \\ 0 \end{bmatrix}{+}\begin{bmatrix} 0 & 0 \\ 0 & 0 \\ \Delta t & 0 \\ 0 & \frac{\Delta t}{M}  \end{bmatrix}
\begin{bmatrix} u_{1,t} \\ u_{2,t} \end{bmatrix}+\mathbf{w}_t,
\end{equation}
where $M$ denotes the mass, and $\mathbf{x}_{t}=[x_{t},y_{t},\theta_{t},v_{t}]^{T}$ captures the 2-D location, heading angle, and linear speed; $\mathbf{u}_{t}=[u_{1,t},u_{2,t}]^{T}$ represents angular velocity ($u_{1,t}$) and driven force ($u_{2,t}$), respectively. The disturbance $\mathbf{w}_t \in \mathcal{W}$ enters the system linearly and is characterized by a probability mass function over the states. In this study, the probability mass function is a simple normal distribution centered at $0$ with a standard deviation of $[0.2, 0.2, 0.2, 0.2]$ for the four states $[x_t, y_t, \theta_t, v_t]$. The system is discretized with $\Delta t = 0.1s$ and the total number of discretization steps $t_T$ in Problem. \ref{prob:opt-prob} is 50. System \eqref{eq:unicycle-model} is subject to the following input constraint:
\begin{equation}
\begin{split}
\label{eq:state-input-constraint}
\mathcal{U}&=\{\mathbf{u}_{t}\in \mathbb{R}^{2}: [-5, -5M]^{T} \le \mathbf{u}_{t}\le [5, 5M]^{T}\}.
\end{split}
\end{equation}
The initial state is $[-1.5,0,\frac{\pi}{12},1]^{T}$ and the target state is $\mathbf{x}_{e}=[2,0,0,0]^{T},$ which are marked as black diamond and green circle in Fig.~\ref{fig:trajectory}. The circular obstacle is centered at $(x_{o},y_{o})=(0,0)$ with $r_o=1$, which is displayed in pale red. To initialize the proposed RACBFs, we multiply an auxiliary variable $a_{1,t}$ by a quadratic distance function for circular obstacle avoidance as $\psi_{0}(\mathbf{x}_{t},\boldsymbol{{\pi}}_{1,t})=a_{1,t}((x_{t}-x_{o})^{2}+(y_{t}-y_{o})^{2}-r_{o}^{2})$. Since the relative degree of $\psi_{0}$ with respect to system \eqref{eq:unicycle-model} is 2, the DHOCBFs for $a_{1,t}$ are defined as 
\begin{equation}
\label{eq:SHOCBF-sequence-unicycle}
\begin{split}
&\varphi_{1,0}(\boldsymbol{{\pi}}_{1,t})\coloneqq a_{1,t},\\
&\varphi_{1,1}(\boldsymbol{{\pi}}_{1,t})\coloneqq \bigtriangleup \varphi_{1,0}(\boldsymbol{{\pi}}_{1,t})+l_{1}\varphi_{1,0}(\boldsymbol{{\pi}}_{1,t}),\\
&\varphi_{1,2}(\boldsymbol{{\pi}}_{1,t})\coloneqq \bigtriangleup \varphi_{1,1}(\boldsymbol{{\pi}}_{1,t})+l_{2}\varphi_{1,1}(\boldsymbol{{\pi}}_{1,t}),
\end{split}
\end{equation}
where $\alpha_{1,1}(\cdot),\alpha_{1,2}(\cdot)$ are defined as linear functions. The other DAVCBFs are then defined as 
\begin{equation}
\label{eq:DAVCBF-sequence-unicycle}
\begin{split}
&\psi_{1}(\mathbf{x}_{t},\boldsymbol{{\pi}}_{1,t})\coloneqq \bigtriangleup \psi_{0}(\mathbf{x}_{t},\boldsymbol{{\pi}}_{1,t})+\lambda_{1}\psi_{0}(\mathbf{x}_{t},\boldsymbol{{\pi}}_{1,t}),\\
&\psi_{2}(\mathbf{x}_{t},\boldsymbol{{\pi}}_{1,t})\coloneqq \bigtriangleup \psi_{1}(\mathbf{x}_{t},\boldsymbol{{\pi}}_{1,t})+\lambda_{2}\psi_{1}(\mathbf{x}_{t},\boldsymbol{{\pi}}_{1,t}),
\end{split}
\end{equation}
where $\alpha_{1}(\cdot),\alpha_{2}(\cdot)$ are set as linear functions. By formulating constraints from DHOCBFs \eqref{eq:SHOCBF-sequence-unicycle}, control bounds \eqref{eq:state-input-constraint}, and reformulated
RACBFs \eqref{eq:reformulated RACBFs} derived from DAVCBFs \eqref{eq:DAVCBF-sequence-unicycle}, we can define cost function as
\begin{small}
\begin{equation}
\label{eq:cost-function-3}
\begin{split}
\min_{\mathbf{u}_{t},\mathbf{x}(\mathbf{u}_{t}),\nu_{1,t},\sigma_{i,t},\zeta_t}[\mathbf{u}_{t}^{T}Q\mathbf{u}_{t}+(\mathbf{x}(\mathbf{u}_{t})-\mathbf{x}_{e})^{T}R(\mathbf{x}(\mathbf{u}_{t})-\mathbf{x}_{e})\\
 +W(\nu_{1,t}-a_{1,w})^{2}+P(\zeta_t+\frac{1}{\beta|\mathcal{W}|}\sum_{i=1}^{|\mathcal{W}|}\sigma_{i,t})].
\end{split}
\end{equation}
\end{small}
Other parameters are set as $a_{1,w}=0, W=1000,P=1$, $Q=\text{diag}(1000, 1000)$, $R = \text{diag}(10000, 10000, 1000, 10)$, $|\mathcal{W}|=20,M=1650,a_{1,0}=0.1, \pi_{1,2,0}=0,\epsilon =10^{-10}$.

We construct two benchmarks based on DHOCBFs: risk-aware DHOCBF and risk-agnostic DHOCBF. For the risk-aware DHOCBF, we remove the auxiliary variables and auxiliary inputs from \eqref{eq:DAVCBF-sequence-unicycle} as well as the third term of the cost function in \eqref{eq:cost-function-3}, while reformulating the DHOCBF using the coherent risk measure from \eqref{eq:reformulated RACBFs}.  For the risk-agnostic DHOCBF, we also remove the auxiliary variables and auxiliary inputs from \eqref{eq:DAVCBF-sequence-unicycle} but do not reformulate the DHOCBF using the coherent risk measure. This means that the risk-agnostic DHOCBF constraints do not account for disturbances. All benchmarks maintain the same parameter settings as the RACBF method.

\begin{figure}[ht]
    \centering
    \includegraphics[scale=0.50]{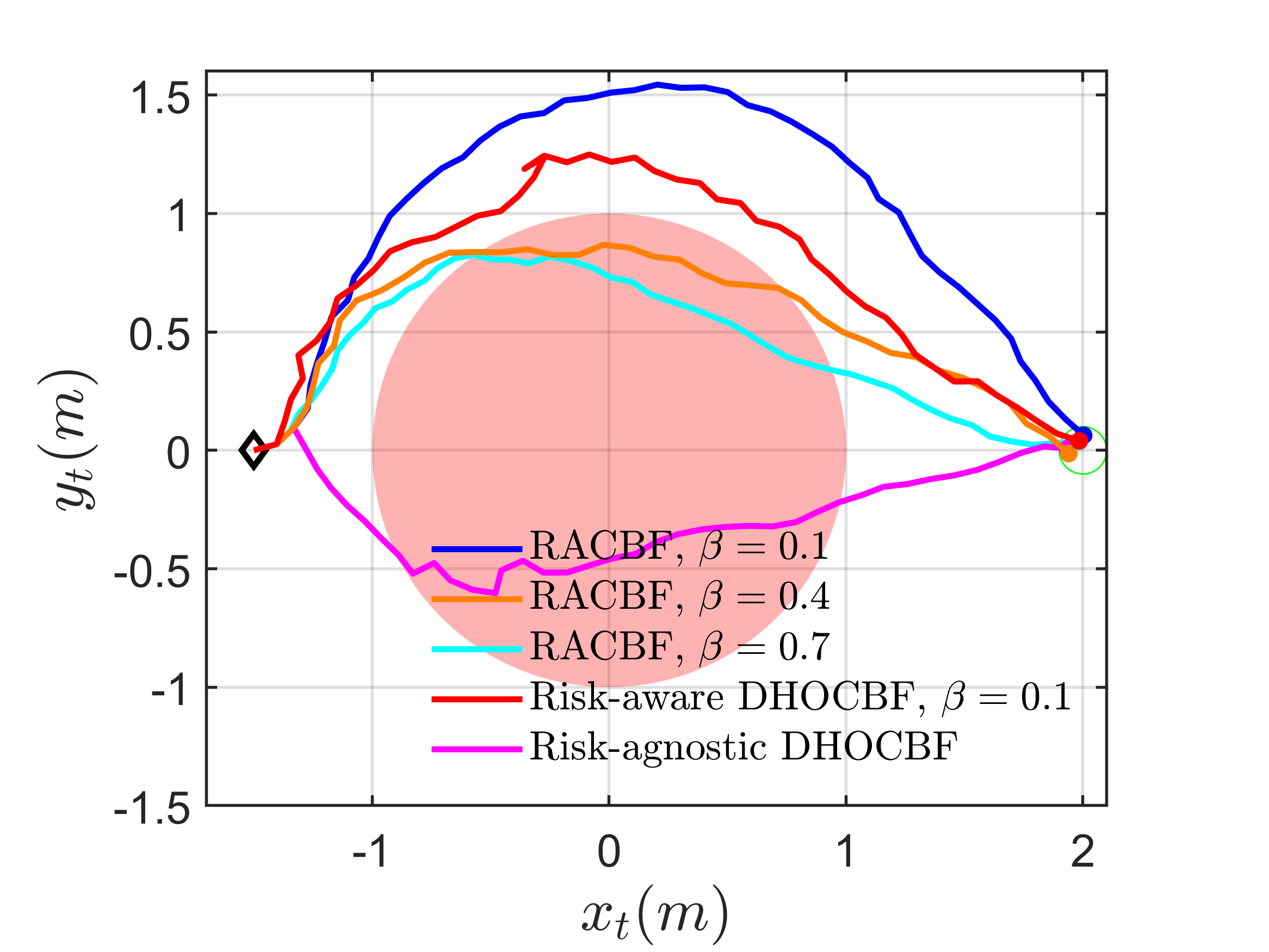}
    \caption{Closed-loop trajectories with controllers RACBF (blue, orange, cyan), risk-aware DHOCBF (red) and risk-agnostic DHOCBF (pink). RACBF and risk-aware DHOCBF with $\beta=0.1$ work well for safety-critical navigation under state disturbances.}
    \label{fig:trajectory}
\end{figure} 

We first analyze safety for different control methods. We set hyperparameters as $\lambda_1=\lambda_2=l_1=l_2=0.4$ for all methods, with $\beta=0.1$ for the risk-aware DHOCBF, while varying $\beta$ for RACBF as shown in Fig. \ref{fig:trajectory}. In the simulation, we observed that only RACBF ensures that the optimization problem remains feasible at all times. In contrast, under DHOCBF-based methods, the system fails to reach the green circle (radius 0.1) due to infeasibility in solving the optimization problem. To address this, we expanded the control input bounds for DHOCBF-based methods, allowing them to generate complete closed-loop trajectories (for more details on feasibility, see the discussion on Fig. \ref{fig:input}). By analyzing the closed-loop trajectories from different controllers, we conclude that reducing the significance risk level $\beta$ in CVaR helps ensure safety under state disturbances (see blue and red curves). The absence of a risk measure or an excessively high significance risk level leads to the system entering the unsafe set (see orange, cyan and pink curves). A smaller $\beta$ means CVaR captures a smaller portion of the loss distribution, making it more focused on extreme worst-case scenarios. Adjusting $\beta$ directly reduces the conservatism of risk assessment when $\beta$ is raised.
\begin{figure}[ht]
    \centering
    \includegraphics[scale=0.50]{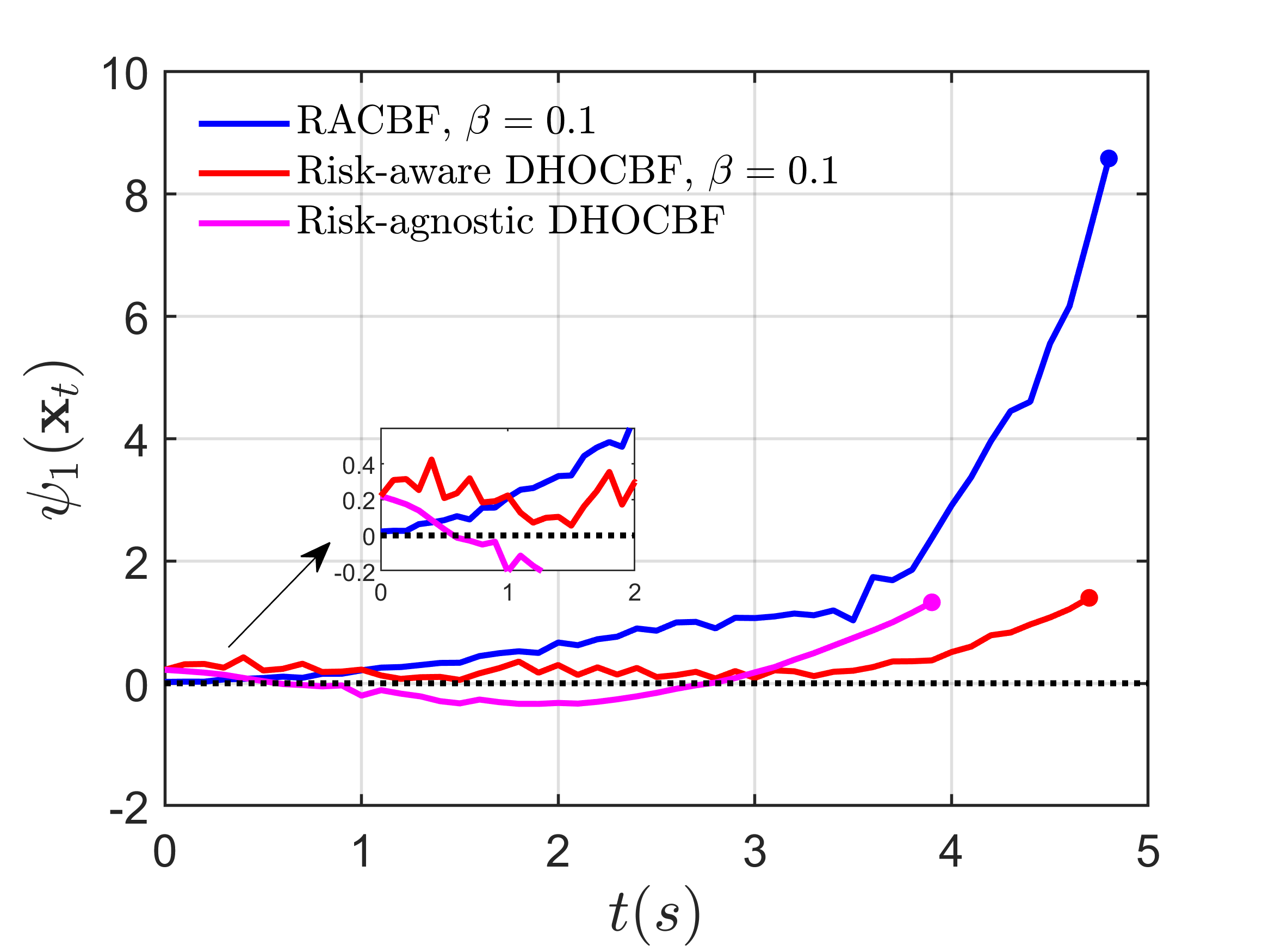}
    \caption{First-order functions over time with controllers RACBF (blue), risk-aware DHOCBF (red) and risk-agnostic DHOCBF (pink). RACBF and risk-aware DHOCBF with $\beta=0.1$ effectively maintain the first-order function non-negative under state disturbances.}
    \label{fig:firstorder}
\end{figure} 

Next, we analyze how RACBF and risk-aware DHOCBF effectively handle multi-step uncertainties that propagate over the relative degree. Since the relative degree of RACBF and risk-aware DHOCBF candidates with respect to system \eqref{eq:unicycle-model} is 2, the control input fully appears in the second-order constraint, introducing a two-time-step delay. In the constraint at each order, state disturbances enter the system, potentially leading to unsafe situations.
The high-order coherent risk measure formulation in RACBF and risk-aware DHOCBF ensures that lower-order functions (e.g., the first-order function $\psi_1$ in Fig. \ref{fig:firstorder}) remain non-negative, thereby maintaining $h$ non-negative and ensuring safety. In contrast, risk-agnostic DHOCBF fails to guarantee the non-negativity of the first-order function, leading to unsafe behavior.
\begin{figure}[ht]
    \centering
    \includegraphics[scale=0.50]{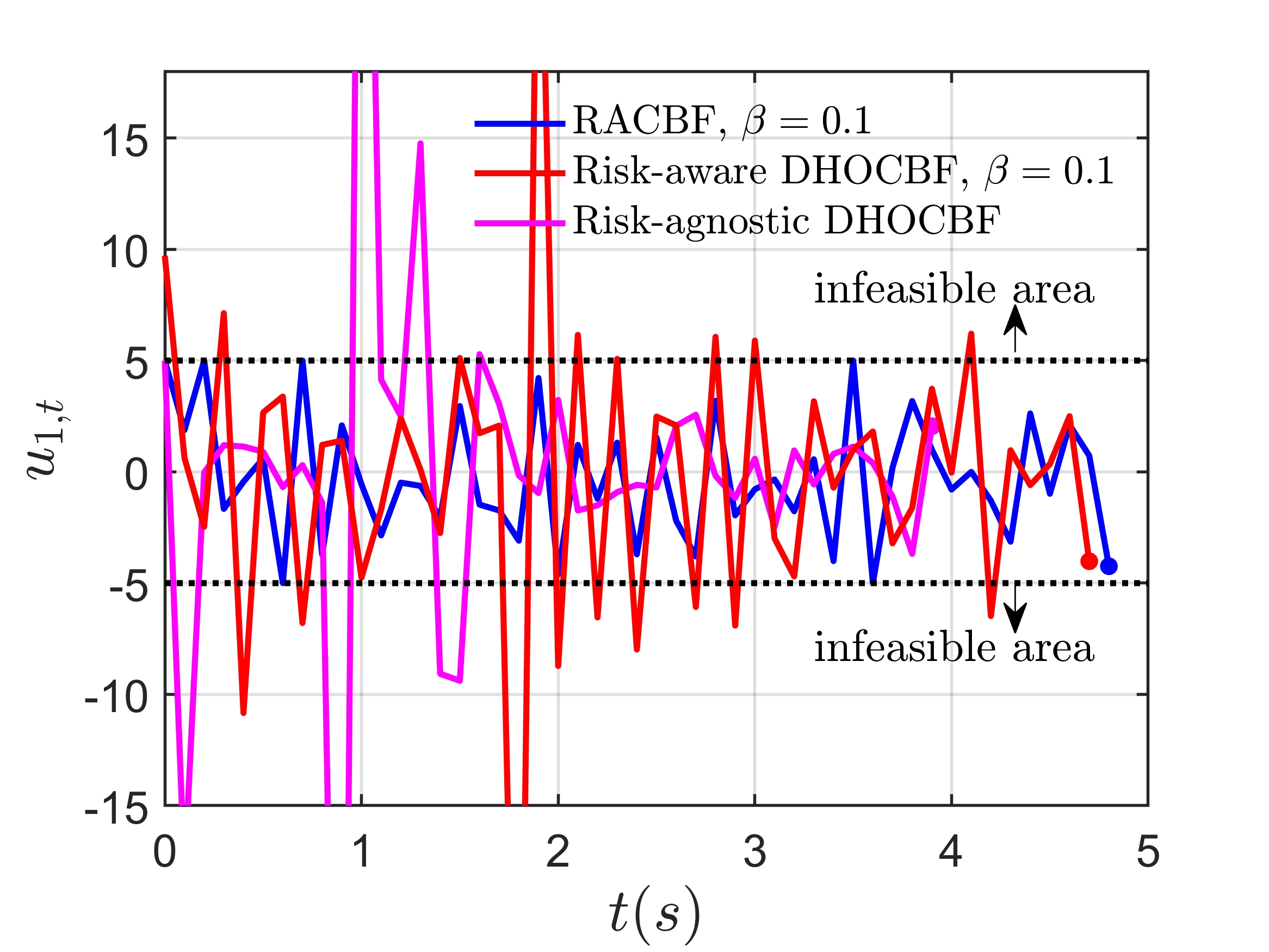}
    \caption{Control input $u_{1,t}$ (angular velocity) over time with controllers RACBF (blue), risk-aware DHOCBF (red) and risk-agnostic DHOCBF (pink). Only RACBF with $\beta=0.1$ effectively keeps $u_{1,t}$ within the corresponding bounds under state disturbances, ensuring feasibility when strict bounds are imposed. In contrast, for the other two methods, $u_{1,t}$ exceeds the corresponding bounds, making the optimization problem infeasible under strict input bounds.}
    \label{fig:input}
\end{figure} 

Finally, we analyze the feasibility of different control methods. As discussed in Fig. \ref{fig:trajectory}, we expanded the control input bounds for DHOCBF-based methods. Otherwise, the control inputs obtained from risk-aware DHOCBF and risk-agnostic DHOCBF would conflict with the narrow input bounds, making the optimization problem infeasible, as shown in Fig. \ref{fig:input}. Expanding the input bounds allows the optimized input to exceed the narrow bounds while keeping the optimization problem feasible, thereby generating a complete closed-loop trajectory. In contrast, RACBF consistently keeps the control input within the narrow input bounds while still producing a safe trajectory. This demonstrates RACBF's adaptivity to input constraints and its capability to enhance feasibility.
\section{Conclusion and Future Work}
\label{sec:Conclusion and Future Work}
In this work, we introduced Risk-Aware Adaptive CBFs (RACBFs) as a novel approach to enforcing safety constraints in stochastic nonlinear systems. Our method integrates Discrete-Time Auxiliary-Variable Adaptive CBFs (DAVCBFs) to improve feasibility and incorporates coherent risk measures to enable risk-sensitive safety enforcement. We showed that RACBFs extend discrete-time high-order CBF constraints over multiple time steps, effectively addressing multi-step uncertainties that propagate through system dynamics. Simulation results on a stochastic unicycle system demonstrated that RACBFs outperform standard robust formulations of discrete-time CBF methods by maintaining feasibility while reducing conservatism. The ability of RACBFs to dynamically adapt to risk levels provides a promising direction for future research in safe and efficient stochastic control, such as finite-time reachability problems in real-world autonomous systems.
\bibliographystyle{IEEEtran}
\balance
\bibliography{references.bib}
\end{document}